\def\C{\mathbb{C}}
\def\SH{\mathbf{S}{\mathbf{H}}}
\def\tSH{\widetilde{\mathbf{S}\mathbf{H}}}
\def\tD{\tilde{D}}
\def\qed{\hfill $\sqcap \hskip-6.5pt \sqcup$}
\def\N{\mathbb{N}}
\def\Z{\mathbb{Z}}
\def\cb{\mathbf{c}}
\def\leqs{\leqslant}
\def\geqs{\geqslant}
\theoremstyle{plain}
\newtheorem{theo}{Theorem}[section]
\newtheorem{lem}[theo]{Lemma}
\newtheorem{prop}[theo]{Proposition}
\newtheorem*{theoab}{Theorem}
\theoremstyle{definition}
\theoremstyle{remark}
\numberwithin{equation}{section}
\title{A presentation of the deformed $W_{1+\infty}$ algebra}
\author{N. Arbesfeld and O. Schiffmann}
\begin{document}

\begin{abstract}
We provide a generators and relation description of the deformed $W_{1+\infty}$-algebra introduced
in previous joint work of E. Vasserot and the second author. This gives a presentation of the (spherical)
cohomological Hall algebra of the one-loop quiver, or alternatively of the spherical degenerate
double affine Hecke algebra of $GL(\infty)$.
\end{abstract}

\maketitle

\centerline{\small{INTRODUCTION}}

\vspace{.1in}

In the course of their work on the cohomology of the moduli space of $U(r)$-instantons on $\mathbb{P}^2$ in relation to $W$-algebras and the AGT conjecture (see \cite{SV-AGT}) E. Vasserot and the second author introduced a certain one-parameter deformation $\SH^{\cb}$ of the enveloping algebra of the Lie algebra $W_{1+\infty}$ of algebraic differential operators on $\mathbb{C}^*$. The algebra $\SH^{\cb}$ --which is defined in terms of Cherednik's double affine Hecke algebras-- acts on the above mentioned cohomology spaces (with a central character depending on the rank $n$ of the instanton space). For the same value of the central character, $\SH^{\cb}$ is also strongly related to the affine $W$ algebra of type $\mathfrak{gl}_n$, and has the same representation theory
(of admissible modules) as the latter. The same algebra $\SH^{\cb}$ arises again as the (spherical) cohomological Hall algebra of the quiver with one vertex and one loop, and as a degeneration of the (spherical) elliptic Hall algebra (see \cite[Sec. 4, 8]{SV-AGT}. 
It also independently appears in the work of Maulik and Okounkov on the AGT conjecture, see \cite{OM}. 

\vspace{.1in}

The definition of $\SH^{\cb}$ given in \cite{SV-AGT} is in terms of a stable limit of spherical degenerate double
affine Hecke algebras, and does not yield a presentation by generators and relations. In this note, we provide such a
presentation, which bears some resemblance with Drinfeld's new realization of
quantum affine algebras and Yangians. Namely, we show that $\SH^{\cb}$ is generated by families of 
elements in degrees $-1, 0, 1$, modulo some simple quadratic and cubic relations (see Theorems~\ref{T:1}, \ref{T:2}).
 
\vspace{.1in}

The definition of $\SH^{\cb}$ is recalled in Section 1. In the short Section 2 we briefly recall the links between $\SH^{\cb}$ and Cherednik algebras, resp. W-algebras. The presentation of $\SH^{\cb}$ is given in Section 3, and
proved in Section 4. 
Although we have tried to make this note as self-contained as possible, there are multiple references to statements in \cite{SV-AGT} and the reader is advised to consult that paper (especially Sections 1 and 8) for details.

\vspace{.2in}

\section{Definition of $\SH^{\cb}$}

\vspace{.1in}

\paragraph{\textbf{1.1. Symmetric functions and Sekiguchi operators.}} Let $\kappa$ be a formal parameter, and let us set $F=\C(\kappa)$. Let us denote by $\Lambda_F$ the ring
of symmetric polynomials in infinitely many variables with coefficients in $F$, i.e.
$$\Lambda_F=F[X_1,X_2, \ldots ]^{\mathfrak{S}_{\infty}}=F[p_1, p_2, \ldots].$$
For $\lambda$ a partition, we denote by $J_{\lambda}$ the integral form of the Jack polynomial associated to $\lambda$ and to the parameter $\alpha=1/\kappa$.
It is well-known that $\{J_{\lambda}\}$ forms a basis of $\Lambda_F$ (see e.g. \cite{Stanley}, or \cite[Sec. 1.3, 1.6]{SV-AGT}).

The polynomials $J_{\lambda}$ arise as the joint spectrum of a family of commuting differential operators $\{D_{0,l}\}, l \geq 1$ called Sekiguchi operators. In the above normalization, these
may be characterized through the following relations :

\begin{equation}\label{E:sekiguchi1}
D_{0,l} (J_{\lambda})=\sum_{s \in \lambda} c(s)^{l-1} J_{\lambda}
\end{equation}
where $s$ runs through the set of boxes in the partition $\lambda$, and where $c(s)=x(s)-\kappa y(s)$ is the content of $s$. Here $x(s), y(s)$ denote the $x$ and $y$-coordinates
of the box $s$, when $\lambda$ is drawn according to the continental convention (see \cite[Sec. 0.1]{SV-AGT}).

We denote by $D_{l,0} \in \text{End}(\Lambda_F)$ the operator of multiplication by the power-sum function $p_l$.
 
\vspace{.1in}

\paragraph{\textbf{1.2. The algebras $\SH^+$ and $\SH^>$.}} Let $\SH^+$ be the unital subalgebra of $\text{End}(\Lambda_F)$ generated by $\{D_{0,l}, D_{l,0}\;|\; l \geqslant 1\}$.
For $l \geq 1$ we set $D_{1,l}=[D_{0,l+1}, D_{1,0}]$. This relation is still valid when $l=0$, and we furthermore have 
\begin{equation}\label{E:sekiguchi2}
[D_{0,l}, D_{1,k}]=D_{1,k+l-1} \qquad l \geqslant 1, k \geqslant 0.
\end{equation}

We denote by $\SH^>$ the unital subalgebra of $\SH^+$ generated by $\{D_{1,l}\;|\; l \geqslant 0\}$, and by $\SH^0$ the unital subalgebra of $\SH^+$
generated by the Sekiguchi operators $\{D_{0,l}\;|\; l \geqslant 1\}$. It is known (and easy to check from (\ref{E:sekiguchi1})) that the $D_{0,l}$ are algebraically independent, i.e.
$\SH^0=F[D_{0,1}, D_{0,2}, \ldots]$.

Observe that by (\ref{E:sekiguchi2}), the operators $ad(D_{0,l})$ preserve the subalgebra $\SH^>$. This allows us to view $\SH^+$ as a semi-direct product of $\SH^0$ and $\SH^>$.
In fact, the multiplication map induces an isomorphism 
\begin{equation}\label{E:triangSH}
\SH^> \otimes \SH^0 \simeq \SH^+
\end{equation}
(see \cite[Prop. 1.18]{SV-AGT}). 

\vspace{.1in}

\paragraph{\textbf{1.3. Grading and filtration}} The algebra $\SH^+$ carries an $\N$-grading, defined by setting
$D_{0,l}, D_{1,k}$ in degrees zero and one respectively. This grading, which corresponds to the degrees as operators on polynomials will be called the \textit{rank} grading.
It also carries an $\N$-filtration compatible with the rank grading, induced from the filtration by the order of differential operators. It may be characterized as follows, see \cite[Prop. 1.2]{SV-AGT}~: $\SH^+[\leqs d]$ is the space of elements $u \in \SH^>$ satisfying
$$ad (z_1) \circ \cdots \circ ad(z_{d+1}) (u)=0$$
for all $z_1, \ldots, z_{d+1} \in F[D_{1,0}, D_{2,0}, \ldots]$. We have $\SH^>[\leqs 0]=F[D_{1,0}, D_{2,0}, \ldots]$.
The following is proved in \cite[Lemma~1.21]{SV-AGT}. Set $D_{r,d}=[D_{0,d+1}, D_{r,0}]$ for $r \geq 1, d \geq 0$.

\vspace{.1in}

\begin{prop} (i) The associated graded algebra $gr\SH^+$ is equal to the free commutative polynomial algebra
in the generators $D_{r,d} \in gr\SH^+[r,d]$, for $r \geqs 0, d \geqs 0, (r,d) \neq (0,0)$.\\
(ii) The associated graded algebra $gr\SH^>$ is equal to the free commutative polynomial algebra
in the generators $D_{r,d} \in gr\SH^+[r,d]$, for $r \geqs 1, d \geqs 0$.
\end{prop}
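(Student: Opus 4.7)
The plan is to prove (ii) first and deduce (i) from it through the triangular decomposition (\ref{E:triangSH}). Since $\SH^0=F[D_{0,1},D_{0,2},\ldots]$ is already a free polynomial algebra in the Sekiguchi operators, which sit in bidegrees $(0,l)$, the isomorphism $\SH^> \otimes \SH^0 \simeq \SH^+$ passes to associated graded and yields $gr\SH^+ \simeq gr\SH^> \otimes F[D_{0,1},D_{0,2},\ldots]$. Thus (i) is an immediate consequence of (ii), and I focus on the latter.

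The commutativity of $gr\SH^>$ comes for free: $\SH^+$ inherits from $\text{End}(\Lambda_F)$ the standard almost-commutative filtration by order of differential operator, so $[\SH^+[\leqs p],\SH^+[\leqs q]] \subseteq \SH^+[\leqs p+q-1]$. Hence there is a canonical bigraded algebra homomorphism
\begin{equation*}
\Phi : F[X_{r,d} : r \geqs 1, d \geqs 0] \longrightarrow gr\SH^>, \qquad X_{r,d} \longmapsto \overline{D_{r,d}},
\end{equation*}
and the two remaining tasks are to show that $\Phi$ is surjective and injective.

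For surjectivity I would argue by induction on the filtration degree, using the explicit description of $\SH^>[\leqs d]$ via $(d+1)$-fold iterated brackets recalled in the excerpt. The base case $\SH^>[\leqs 0]=F[D_{1,0},D_{2,0},\ldots]$ is immediate. For the inductive step one combines (\ref{E:sekiguchi2}) with the computation that $[D_{1,a}, D_{r-1,b}]$ equals $(r-1) D_{r,a+b}$ modulo $\SH^+[\leqs a+b-1]$; iterating this identity both realises every $D_{r,d}$ inside $\SH^>$ and expresses any element modulo lower filtration as a polynomial in the $D_{r,d}$'s.

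The main obstacle is injectivity of $\Phi$, that is, algebraic independence of the $D_{r,d}$ in $gr\SH^>$. My strategy is a bigraded Hilbert series comparison: the polynomial side has generating series $\prod_{r\geqs 1, d \geqs 0}(1-t^r u^d)^{-1}$ in variables tracking rank and filtration, and one must bound $gr\SH^>$ from below by the same series. Since $\SH^> \subset \text{End}(\Lambda_F)$ acts faithfully, it is enough to evaluate ordered products $D_{r_1,d_1}\cdots D_{r_k,d_k}$ on Jack polynomials $J_\lambda$ with $|\lambda|\geqs\sum r_i$, extract the leading filtration contribution via (\ref{E:sekiguchi1}), and show that the resulting matrix indexed by multisets $\{(r_i,d_i)\}$ is triangular (hence invertible) with respect to the dominance order on partitions. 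Matching the two Hilbert series forces $\Phi$ to be an isomorphism, proving (ii) and with it (i).
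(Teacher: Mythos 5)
The paper does not actually prove this Proposition; it is quoted from \cite[Lemma~1.21]{SV-AGT} (and the variant (\ref{E:relDD'}) from \cite[Prop.~1.38]{SV-AGT}), so there is no in-text proof to compare against. Your skeleton --- reduce (i) to (ii) via (\ref{E:triangSH}), get commutativity of $gr\SH^>$ from almost-commutativity of the order filtration, then prove the map from the free polynomial algebra is onto and injective by a bigraded dimension count --- is a sensible plan and broadly in the spirit of the cited proof. But as written both substantive steps have genuine gaps.

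The surjectivity step is internally inconsistent. You invoke the identity $[D_{1,a},D_{r-1,b}]\equiv (r-1)D_{r,a+b}$ modulo $\SH^+[\leqs a+b-1]$; but by the very almost-commutativity you have just recorded, $[D_{1,a},D_{r-1,b}]$ already lies in $\SH^+[\leqs a+b-1]$, so your congruence would force $D_{r,a+b}=0$ in $gr\SH^+[r,a+b]$, contradicting the statement being proved. The correct leading-order statement is $[D_{1,a},D_{r-1,b}]\equiv c\,D_{r,a+b-1}$ modulo order $\leqs a+b-2$, with a scalar $c$ depending on $a,b,r$ (compare $[D_{k,1},D_{l,0}]=klD_{k+l,0}$, \cite[(1.91)]{SV-AGT}). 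More seriously, even the corrected identity does not yield surjectivity of $\Phi$: the order filtration is \emph{not} the filtration induced by placing $D_{1,l}$ in degree $l$, so a linear combination of monomials $D_{1,l_1}\cdots D_{1,l_r}$ can drop in order, and the symbol of such an element in the lower graded piece is not a priori a polynomial in the $D_{r',d'}$. Controlling precisely these drops is the whole difficulty (it is what the matrix $M$ of Section~4.7 accomplishes on the abstract side), and ``iterating this identity \dots expresses any element modulo lower filtration as a polynomial in the $D_{r,d}$'s'' asserts the conclusion rather than proving it. The injectivity step is likewise only named: the triangularity, with respect to dominance order, of the infinite matrix of evaluations of ordered products on the $J_\lambda$ is exactly the combinatorial heart of the matter, and you supply neither the leading-term computation nor the required dictionary between the order filtration and the growth of matrix coefficients on Jack polynomials. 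Finally, the reduction of (i) to (ii) silently uses that (\ref{E:triangSH}) is compatible with the filtrations and that the symbols of the algebraically independent $D_{0,l}$ remain algebraically independent in $gr\SH^0$; both are true but need a line of justification.
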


\vspace{.1in}

We will need the following slight variant of the above result, which can easily be deduced from \cite[Prop. 1.38]{SV-AGT}. For $r \geqs 1$, set $D'_{r,d}=ad (D_{0,2})^d ( D_{r,0}).$ Then 
\begin{equation}\label{E:relDD'}
D'_{r,d} \in r^{d-1}D_{r,d} \oplus \SH^>[r, \leqs d-1].
\end{equation}
In particular, $gr\SH^>$ is also freely generated by the elements $D'_{r,d} \in gr\SH^>[r,d]$.

\vspace{.1in}

\paragraph{\textbf{1.4. The algebra $\SH^{\cb}$.}} Let $\SH^<$ be the opposite algebra of $\SH^>$. We denote the
generator of $\SH^>$ corresponding to $D_{1,l}$ by $D_{-1,l}$. The algebra $\SH^{\cb}$ is generated by
$\SH^>, \SH^0, \SH^<$ together with a family of central elements
$\cb=(c_0, c_1, \ldots)$ indexed by $\N$, modulo a certain set of relations involving the commutators $[D_{-1,k}, D_{1,l}]$ (see \cite[Sec. 1. 8]{SV-AGT}). In order to write down these relations, we need a few notations. Set
$\xi=1-\kappa$ and
$$G_0(s)=-\text{log}(s), \qquad G_l(s)=(s^{-1}-1)/l, \qquad l \geqs 1,$$
$$\varphi_l(s)=\sum_{q=1,-\xi, -\kappa}s^l(G_l(1-qs)-G_l(1+qs)), \quad l \geqs 1,$$
$$\phi_l(s)=s^lG_l(1+\xi s ).$$
We may now define $\SH^{\cb}$ as the algebra generated by $\SH^>, \SH^<, \SH^0$ and $F[c_0, c_1, \ldots]$ modulo the following relations~:
\begin{equation}\label{E:def-1}
[D_{0,l}, D_{1,k}]=D_{1,k+l-1}, \qquad [D_{-1,k}, D_{0,l}]=D_{-1,k+l-1},
\end{equation}
\begin{equation}\label{E:def0}
 [D_{-1,k}, D_{1,l}]=E_{k+l}, \qquad l, k \geqs 0,
\end{equation}
where the elements $E_{h}$ are determined through the formulas
\begin{equation}\label{E:rel-11}
1 + \xi \sum_{l \geqs 0}E_ls^{l+1}=\text{exp} \left( \sum_{l \geqs 0} (-1)^{l+1} c_l \phi_l(s)\right)\text{exp}
\left(\sum_{l \geqs 0}D_{0,l+1}\varphi_l(s)\right).
\end{equation}

\vspace{.1in}

Set $\SH^{0,\cb}=\SH^0 \otimes F[c_0, c_1, \ldots]$. One can show that the multiplication map provides an isomorphism of $F$-vector spaces 
$$\SH^> \otimes \SH^{0,\cb} \otimes \SH^< \simeq \SH^{\cb}.$$
Putting the generators
$D_{\pm 1,k}$ in degree $\pm 1$ and the generators $D_{0,l}, c_i$ in degree zero induces an $\mathbb{Z}$-grading on $\SH^{\cb}$. One can show that the order filtration on $\SH^>, \SH^<$ can be extended to a filtration on the whole
$\SH^{\cb}$, but we won't need this last fact.

\vspace{.2in}

\section{Link to W-algebras, Cherednik algebras and shuffle algebras}

\vspace{.1in}

\paragraph{\textbf{2.1. Relation the Cherednik algebras.}} Let $\omega$ be a new formal parameter and let $\SH^\omega$ be the specialization of $\SH$ at $c_0=0, c_i=-\kappa^i \omega^i$.
Let $\mathbf{H}_n$ be Cherednik's degenerate (or trigonometric) double affine Hecke algebra with parameter $\kappa$ 
(see \cite{Cherednik}).
Let $\SH_n \subset \mathbf{H}_n$ be its spherical subalgebra. The following result
shows that $\SH^\omega$ may be thought of as the stable limit of $\SH_n$ as $n$ goes to infinty (see \cite[Sec. 1.7]{SV-AGT})~:

\vspace{.1in}

\begin{theoab} For any $n$ there exists a surjective algebra homomorphism $\Phi_n~: \SH^\omega \to \SH_n$
such that $\Phi_n(\omega)=n$. Moreover
$\bigcap_n \text{Ker}\; \Phi_n =\{0\}.$ 
\end{theoab}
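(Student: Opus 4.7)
The plan is to construct $\Phi_n$ on generators via the polynomial representation of the Cherednik algebra $\mathbf{H}_n$, verify the defining relations of $\SH^\omega$ by eigenvalue computations on $n$-variable Jack polynomials, deduce surjectivity from a PBW-type generation statement for $\SH_n$, and then establish the triviality of $\bigcap_n \text{Ker}\;\Phi_n$ using the triangular decomposition of $\SH^{\cb}$ together with a density argument on partition contents.

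First I would define $\Phi_n$ on generators. The spherical algebra $\SH_n$ acts faithfully on $\Lambda_n = F[X_1,\ldots,X_n]^{\mathfrak{S}_n}$ via the polynomial representation of $\mathbf{H}_n$. Set $\Phi_n(D_{0,l})$ equal to the $l$-th Sekiguchi operator in $n$ variables, $\Phi_n(D_{r,0})$ (for $r \geqs 1$) equal to multiplication by the power sum $p_r(X_1,\ldots,X_n)$, and $\Phi_n(D_{-r,0})$ equal to the corresponding dual operator on $\Lambda_n$ obtained by symmetrizing an appropriate expression in the Cherednik--Dunkl operators $Y_i$. The remaining generators $D_{\pm 1, l}$ are then forced by the commutator identities of (\ref{E:def-1}), which hold in $\SH_n$ as an immediate consequence of the $n$-variable version of (\ref{E:sekiguchi1}). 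The key check is the cross-commutator relation (\ref{E:def0}): a Pieri-style computation shows that $[\Phi_n(D_{-1,k}), \Phi_n(D_{1,l})]$ acts diagonally on the Jack basis $\{J^{(n)}_\lambda\}$ with eigenvalues that assemble into precisely the right-hand side of (\ref{E:rel-11}) evaluated at $c_0 = 0$, $c_i = -\kappa^i n^i$, i.e.\ at the image of the central elements under $\omega \mapsto n$. Surjectivity of $\Phi_n$ then follows immediately: by the PBW theorem for $\mathbf{H}_n$, $\SH_n$ is generated by the symmetric functions of the $Y_i$ together with multiplication by the $p_r$ and their duals, all of which lie in the image of $\Phi_n$.

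The main obstacle is the vanishing $\bigcap_n \text{Ker}\;\Phi_n = \{0\}$. I would work through the isomorphism $\SH^> \otimes \SH^{0,\cb} \otimes \SH^< \simeq \SH^{\cb}$, which at $\omega = n$ maps compatibly into the analogous triangular decomposition of $\SH_n$. Write an element $u$ in the intersection as $u = \sum_i a_i \otimes h_i(\omega) \otimes b_i$ with $a_i \in \SH^>$, $b_i \in \SH^<$ of bounded rank and $h_i \in \SH^0 \otimes F[\omega]$. The rank bigrading on $\SH^> \otimes \SH^<$ is preserved by $\Phi_n$, so contributions of different bigraded weights $(r,r')$ can be separated; moreover, since the $h_i$ depend polynomially on $\omega$, matching the specializations at infinitely many integer values of $n$ further separates the $\SH^0$-coefficients. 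This reduces the claim to showing that any nonzero element of $\SH^>$ (resp.\ $\SH^<$) acts nontrivially on some $\Lambda_n$. For this I would invoke Proposition~1.1(ii) together with (\ref{E:relDD'}), which present $gr\SH^>$ as a free polynomial algebra on the $D'_{r,d}$. Evaluating on $J^{(n)}_\lambda$ produces, in top filtration degree, a polynomial in the contents $c(s)$ of $\lambda$; since these content vectors fill a Zariski-dense subset as $\lambda$ and $n$ vary, no nonzero polynomial combination of the $D'_{r,d}$ can act trivially on every $\Lambda_n$, and the claim follows.
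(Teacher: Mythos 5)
A preliminary remark: the paper itself contains no proof of this theorem --- it is quoted from \cite[Sec.~1.7]{SV-AGT}, where $\SH^\omega$ is essentially \emph{constructed} as a stable limit of the $\SH_n$, so that the existence of $\Phi_n$ and the vanishing of $\bigcap_n \text{Ker}\,\Phi_n$ are close to being built into the definition. Your proposal reconstructs the content of that argument, and its overall architecture --- define $\Phi_n$ through the polynomial representation, verify (\ref{E:def0}) and (\ref{E:rel-11}) by diagonalizing on $n$-variable Jack polynomials, deduce surjectivity from the generation of $\SH_n$ by symmetric polynomials in the $X_i^{\pm 1}$ and in the Cherednik--Dunkl operators, and prove joint faithfulness via the triangular decomposition --- is the right one and is consistent with \cite{SV-AGT}. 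A minor slip: $\Phi_n(D_{-r,0})$ should be the symmetrization of an expression built from the $X_i^{-1}$ (the lowering half of the trigonometric DAHA), not of an expression in the $Y_i$ alone; the $Y_i$ only account for the degree-zero part $\SH^0$.

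The one substantive gap is in the separation step for $\bigcap_n \text{Ker}\,\Phi_n=\{0\}$. You assert that ``contributions of different bigraded weights $(r,r')$ can be separated'' because $\Phi_n$ respects the rank bigrading, but the action on $F[X_1,\dots,X_n]^{\mathfrak{S}_n}$ only detects the total rank $r-r'$: terms such as $D_{1,0}\,h\,D_{-1,0}$ and $D_{2,0}\,h'\,D_{-2,0}$ have the same total rank and cannot be distinguished by degree considerations alone. The standard repair is an induction on the rank of the $\SH^<$-component: evaluate $\Phi_n(u)$ on $J^{(n)}_\lambda$ for partitions $\lambda$ with few boxes, so that every term whose $\SH^<$-component has rank exceeding $|\lambda|$ acts by zero; the surviving leading terms are then handled by exactly the content/Zariski-density argument you invoke at the end, and one peels off the $\SH^<$-ranks one at a time (equivalently, one can use the Jack inner product, under which $D_{-1,l}$ is adjoint to $D_{1,l}$ up to normalization, to reduce everything to faithfulness of $\SH^>\otimes\SH^{0,\cb}$). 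With that insertion, your reduction to the statement that no nonzero element of $\SH^>$ kills every $F[X_1,\dots,X_n]^{\mathfrak{S}_n}$, and the final density argument via Proposition~1.1(ii) and (\ref{E:relDD'}), go through.
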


\vspace{.1in}

\paragraph{\textbf{2.2. Realization as a shuffle algebra.}} Consider the rational function
$$g(z)=\frac{h(z)}{z}, \qquad h(z)=(z+1-\kappa)(z-1)(z+\kappa).$$
Following \cite{Feigin}, we may associate to $g(z)$ an $\N$-graded associative $F$-algebra $A_{g(z)}$, the \textit{symmetric shuffle algebra of $g(z)$} as follows. As a vector space,
$$A_{g(z)}=\bigoplus_{n \geqs 0} F[z_1, \ldots, z_n]^{\mathfrak{S}_n}$$
with multiplication given by
$$P(z_1, \ldots, z_r) \star Q(z_1, \ldots, z_s)=\sum_{\sigma \in Sh_{r,s}} \sigma \cdot \bigg(  \hspace{-.1in} \prod_{\substack{1 \leqs i \leqslant r\\r+1 \leqs j \leqs r+s}}\hspace{-.1in} g(z_i-z_j)\cdot P(z_1, \ldots, z_r) Q(z_{r+1}, \ldots, z_{r+s})\bigg)$$
where $Sh_{r,s} \subset \mathfrak{S}_{r+s}$ is the set of $(r,s)$ shuffles inside the symmetric group $\mathfrak{S}_{r+s}$.
Let $S_{g(z)} \subseteq A_{g(z)}$ denote the subalgebra generated by $A_{g(z)}[1]=F[z]$. The following
is proved in \cite[Cor. 6.4]{SV-AGT}~:

\vspace{.1in}

\begin{theoab} The assignment $S_{g(z)}[1] \ni z^l \mapsto D_{1,l}, \; l \geqs 0$ induces an isomorphism of $F$-algebras
$$S_{g(z)} \stackrel{\sim}{\longrightarrow} \SH^>.$$
\end{theoab}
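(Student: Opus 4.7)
The plan is to define $\Psi\colon S_{g(z)} \to \SH^>$ by $z^l \mapsto D_{1,l}$ and show it is an isomorphism in three steps: well-definedness, surjectivity, and a bigraded Hilbert series comparison for injectivity.

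For well-definedness, I would begin from the free associative $F$-algebra $\mathcal{F}$ on generators $\{T_l\}_{l \geqs 0}$, which admits canonical surjections both onto $S_{g(z)}$ (via $T_l \mapsto z^l$ with product $\star$) and onto $\SH^>$ (via $T_l \mapsto D_{1,l}$); it then suffices to check that $\ker(\mathcal{F} \tto S_{g(z)}) \subseteq \ker(\mathcal{F} \tto \SH^>)$. The shuffle subalgebra $S_{g(z)} \subseteq A_{g(z)}$ is governed by Feigin--Odesskii wheel conditions dictated by the three roots $\kappa - 1$, $1$, $-\kappa$ of $h(z)$; these translate into explicit quadratic relations (from symmetrization of $\star$ on degree-one elements) and cubic Serre-like relations among the $z^l$. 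Verifying each in $\SH^>$ reduces, via (\ref{E:sekiguchi2}), to a direct commutator computation using the Pieri-type action of $D_{1,l}$ on the Jack basis $\{J_\lambda\}$; the key arithmetic input is that the contents $c(s) = x(s) - \kappa y(s)$ of boxes added by successive applications of $D_{1,\bullet}$ differ by quantities that precisely match the roots of $h$.

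Surjectivity of $\Psi$ is immediate since $\SH^>$ is generated by $\{D_{1,l}\}$ by definition. For injectivity, I would compare bigraded dimensions under the rank/filtration bigrading $(r,d)$. On the target side, Proposition~1.1(ii) combined with (\ref{E:relDD'}) identifies $gr\SH^>$ with the free commutative polynomial $F$-algebra on generators $D'_{r,d} = ad(D_{0,2})^d(D_{r,0})$ in bidegree $(r,d)$ for $r \geqs 1$, $d \geqs 0$, giving Hilbert series $\prod_{r \geqs 1,\, d \geqs 0}(1 - s^r t^d)^{-1}$. On the shuffle side, the element $\Psi^{-1}(D'_{r,d})$ is an $r$-fold iterated $\star$-commutator of degree-one elements; an explicit computation shows that its leading term in polynomial degree is the product of $\prod_{i<j} h(z_i - z_j)/(z_i - z_j)$ with a symmetric monomial of degree $d$. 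A leading-term induction then establishes that these elements generate $S_{g(z)}$ freely and realize the same Hilbert series, forcing $\Psi$ to be an isomorphism.

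The principal obstacle is the injectivity step: one must pin down $S_{g(z)}$ inside $A_{g(z)}$ precisely (via the wheel conditions) and then carry out the leading-term analysis of iterated shuffle-commutators. The algebraic crux is the verification of the cubic Serre-type relation in $\SH^>$ --- the shuffle-theoretic avatar of the cubic relations appearing in the main theorems of the present paper --- which is what isolates the specific cubic-over-linear rational function $g(z)$ as the one governing $\SH^>$.
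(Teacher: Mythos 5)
Your three-step architecture (well-definedness, surjectivity, dimension count) is reasonable in outline, but two of the steps have genuine gaps. Note first that the paper does not prove this statement itself --- it quotes it from \cite[Cor.~6.4]{SV-AGT} --- and the argument there is organized differently from yours. The first gap is in well-definedness. To get $\ker(\mathcal{F}\tto S_{g(z)})\subseteq\ker(\mathcal{F}\tto\SH^>)$ you propose to verify a finite list of quadratic and cubic relations ``translated from the wheel conditions.'' But the wheel conditions describe the \emph{image} of $S_{g(z)}$ inside $A_{g(z)}$, not a generating set of the two-sided ideal $\ker(\mathcal{F}\tto S_{g(z)})$, and the paper explicitly warns (remark after Theorem~\ref{T:1}) that $\tSH^>$ is \emph{not} presented on the degree-one generators by the quadratic family (\ref{E:def3'}) together with the cubic relation (\ref{E:def4}). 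So checking those particular relations in $\SH^>$ does not show that \emph{every} relation of $S_{g(z)}$ holds there. The mechanism you invoke at the end of that paragraph --- contents of successively added boxes differing by the roots of $h$ --- is the right ingredient, but it must be used structurally rather than relation-by-relation: one proves that the matrix coefficient of $D_{1,k_1}\cdots D_{1,k_r}$ between $J_\lambda$ and $J_\mu$ equals a factor depending only on the skew shape $\mu/\lambda$ times the evaluation of $z^{k_1}\star\cdots\star z^{k_r}$ at the content vector of $\mu/\lambda$. Since these content vectors form a Zariski-dense set, this one identity gives $\ker(\mathcal{F}\tto S_{g(z)})=\ker(\mathcal{F}\tto\SH^>)$, i.e.\ well-definedness \emph{and} injectivity in one stroke; this is essentially how \cite{SV-AGT} proceeds.

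The second gap is in the injectivity step as you have set it up. The inequality $\dim S_{g(z)}[r,\leqs d]\leqs\dim\SH^>[r,\leqs d]$ is the hard half of the Hilbert-series comparison, and your ``leading-term induction'' only points at it. Concretely: $g(z)=h(z)/z$ is an inhomogeneous rational function (with a nonzero $z^{-1}$ term), so the shuffle product is only \emph{filtered}, not graded, by polynomial degree, and matching that filtration with the order filtration on $\SH^>$ involves a shift by $r(r-1)$ that must be tracked. More seriously, the claim that the top-degree part of an $r$-fold iterated $\star$-commutator is $\prod_{i<j}(z_i-z_j)^2$ times a symmetric monomial, and that monomials in such elements span each filtered piece, requires ruling out cancellation in the symmetrization over shuffles --- exactly the phenomenon that makes degenerations of shuffle algebras delicate. (Also, you cannot write $\Psi^{-1}(D'_{r,d})$ before injectivity is established; you mean the element of $S_{g(z)}$ given by the same iterated commutator expression in the $z^l$.) None of this is unfixable, but as written the assertion that ``these elements generate $S_{g(z)}$ freely and realize the same Hilbert series'' is the theorem to be proved, not a computation that can be deferred.
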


\vspace{.1in}

\paragraph{\textbf{2.3. Relation to $W$-algebras.}} Let $W_{1+\infty}$ be the universal central extension of the Lie algebra of all differential operators on $\mathbb{C}^*$ (see e.g. \cite{Kac}). This is a $\mathbb{Z}$-graded and $\N$-filtered Lie algebra. The following result shows that $\SH$ may be thought of as a deformation of the universal 
enveloping algebra $U(W_{1+\infty})$ of $W_{1+\infty}$ (see \cite[App. F]{SV-AGT})~:

\vspace{.1in}

\begin{theoab} The specialization of $\SH^{\cb}$ at $\kappa=1$ and $c_i=0$ for $i \geqs 1$ is isomorphic to $U(W_{1+\infty})$.
\end{theoab}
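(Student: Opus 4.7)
The plan is to construct an explicit algebra homomorphism $\Phi : U(W_{1+\infty}) \to \overline{\SH}$, where $\overline{\SH}$ denotes the specialization of $\SH^{\cb}$ at $\kappa = 1$ and $c_i = 0$ for $i \geqs 1$, and then to show that it is an isomorphism using the triangular decompositions available on both sides. Recall that $W_{1+\infty}$ carries a natural basis of normally-ordered elements $w_{r,d}$ corresponding to $z^r(z\partial_z)^d$ for $r \in \Z$ and $d \geqs 0$, together with a one-dimensional centre spanned by an element $K$. I would define $\Phi$ on ``Chevalley-like'' generators by setting $\Phi(w_{\pm 1, d}) = D_{\pm 1, d}$, $\Phi(w_{0, d+1}) = D_{0, d+1}$ and $\Phi(K) = c_0$, and then verify that every defining relation of $U(W_{1+\infty})$ is respected in $\overline{\SH}$.

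The relations $[D_{0,l}, D_{\pm 1, k}] = \pm D_{\pm 1, k+l-1}$ from (\ref{E:def-1}) reproduce the brackets $[w_{0,l}, w_{\pm 1, k}]$ on the nose, so the real content lies in the mixed relation, namely that $[D_{-1,k}, D_{1,l}]$ in $\overline{\SH}$ yields the bracket $[w_{-1,k}, w_{1,l}]$ together with its central extension term. The main technical obstacle is the computation of $E_{k+l}$ in the limit $\kappa \to 1$, equivalently $\xi \to 0$. Since the factor $\xi$ appears on the left-hand side of (\ref{E:rel-11}), this amounts to expanding its right-hand side to first order in $\xi$: at $\xi = 0$ the three values $\{1, -\xi, -\kappa\}$ entering $\varphi_l$ degenerate to $\{1, 0, -1\}$ and the two nontrivial contributions to $\varphi_l(s)$ cancel identically, so the leading term vanishes; the coefficient of $\xi^1$ in $\varphi_l(s)$ is then a rational series in $s$ whose coefficients, by a direct computation, match precisely those obtained by expanding the Lie bracket $[z^{-1} P(z\partial_z),\, z\,Q(z\partial_z)]$ of differential operators on $\C^*$ in the normally ordered basis. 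Similarly $\phi_l(s) = s^l G_l(1+\xi s)$ vanishes at $\xi=0$ but its first-order coefficient in $\xi$ reproduces exactly the $2$-cocycle defining the central extension of the Lie algebra of differential operators, with $c_0$ playing the role of the central charge.

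Surjectivity of $\Phi$ is immediate, since the generators $D_{\pm 1,d}, D_{0,l}, c_0$ generate $\overline{\SH}$ by construction. Injectivity is the remaining point, and I would address it using the triangular decomposition $\SH^> \otimes \SH^{0,\cb} \otimes \SH^< \simeq \SH^{\cb}$ recalled in Section~1.4, paralleled by the standard decomposition $U(W_{1+\infty}^+) \otimes U(\mathfrak{h}) \otimes U(W_{1+\infty}^-) \simeq U(W_{1+\infty})$. By the Proposition of Section~1.3, $gr\,\SH^>$ is the free commutative polynomial algebra on the classes of the $D_{r,d}$, $r \geqs 1,\ d \geqs 0$, which matches exactly the PBW basis of $U(W_{1+\infty}^+)$; the same holds for $\SH^<$, and on the Cartan factor $\Phi$ is visibly the identification $F[D_{0,l} \,|\, l \geqs 1] \otimes F[c_0] \simeq U(\mathfrak{h} \oplus F\cdot K)$. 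A compatibility check between $\Phi$ and the respective filtrations at the level of the associated graded then forces injectivity, and hence $\Phi$ is an isomorphism.
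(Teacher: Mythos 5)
The paper does not actually prove this statement; it is quoted from \cite[App. F]{SV-AGT}, so your proposal can only be judged on its own merits. The overall shape (an explicit map between the two algebras, matched triangular decompositions, and a PBW/free-polynomial comparison of associated graded algebras) is the right one and is essentially what happens in loc.\ cit. But as written there are two concrete gaps that would stop the argument. First, defining $\Phi$ only on the elements $w_{\pm 1,d}, w_{0,d}, K$ and then ``verifying every defining relation of $U(W_{1+\infty})$'' is not available to you: the defining relations of $U(W_{1+\infty})$ are $xy-yx=[x,y]$ for \emph{all} pairs $x,y \in W_{1+\infty}$, so you would need to specify $\Phi(w_{r,d})$ for every $r \in \Z$ and check all brackets, or else invoke a presentation of $U(W_{1+\infty})$ by the generators $w_{\pm 1,d}, w_{0,d}, K$ alone --- which is precisely the $\kappa=1$ case of Theorem~\ref{T:2} and cannot be assumed without circularity. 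Second, the claim that $[D_{0,l},D_{\pm 1,k}]=\pm D_{\pm 1,k+l-1}$ reproduces $[w_{0,l},w_{\pm 1,k}]$ ``on the nose'' is false for $l \geqs 2$: with $w_{0,l}=(z\partial_z)^l$ one has $[w_{0,2},w_{1,k}]=2w_{1,k+1}+w_{1,k}$, and so on. The correct dictionary must send $D_{0,l}$ to a degree-$l$ polynomial in $z\partial_z$ of binomial type (consistent with the Sekiguchi eigenvalues $\sum_s c(s)^{l-1}$ at $\kappa=1$), i.e.\ a triangular change of basis in the Cartan part is forced, and this change of basis then propagates into the computation of the elements $E_h$.

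Beyond this, the computation that carries all the content --- that the first-order term in $\xi$ of the right-hand side of (\ref{E:rel-11}) reproduces $[w_{-1,k},w_{1,l}]$ together with the standard $2$-cocycle of $W_{1+\infty}$ with central charge $c_0$ --- is asserted rather than performed; note also that since $\kappa=1-\xi$, the summand $q=-\kappa$ in $\varphi_l$ contributes at order $\xi$ through its own $\xi$-dependence, so the expansion is more delicate than extracting ``the coefficient of $\xi^1$ in $\varphi_l$'' at fixed $q$. Finally, $\SH^{\cb}$ is defined over $F=\C(\kappa)$, so ``specialization at $\kappa=1$'' requires choosing an integral form and verifying that the triangular decomposition and the freeness of $gr\SH^>$ and $gr\SH^<$ survive the specialization; without that, the PBW comparison you use for injectivity is not justified. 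Each of these points is repairable, but each must be carried out explicitly for the proof to stand.
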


\vspace{.1in}

More interesting is the fact that, for certain good choices of the parameters $c_0, c_1, \ldots$, a suitable completion
of $\SH^{\cb}$ is isomorphic to the current algebra of the (affine) $W$-algebra $W(\mathfrak{gl}_r)$ (see e.g. \cite[Sec. 3.11]{Arakawa}). Fix an integer $r \geqs 1$, $k \in \mathbb{C}$ and let $(\varepsilon_1, \ldots, \varepsilon_r)$ be new formal parameters. Let $\mathfrak{U}(W_k(\mathfrak{gl}_r))'$ be the formal current algebra of $W(\mathfrak{gl}_r)$ at level $k$, defined over the field $F(\varepsilon_1, \ldots, \varepsilon_r)$ (see \cite[Sec. 8.4]{SV-AGT} for details). Let $\SH^{(r)}$ be the specialization of $\SH^{\cb}$ to $\kappa=k+r$, $c_i=\varepsilon_1^i + \cdots + \varepsilon_r^i$ for $i \geqs 0$. The following is proved in \cite[Cor. 8.24]{SV-AGT}, to which we refer for details.

\vspace{.1in}

\begin{theoab} There is an embedding $\SH^{(r)} \to \mathfrak{U}(W_k(\mathfrak{gl}_r))'$ with a dense image,
which induces an equivalence between the category of admissible $\SH^{(r)}$-modules and the category
of admissible $\mathfrak{U}(W_k(\mathfrak{gl}_r))'$-modules.
\end{theoab}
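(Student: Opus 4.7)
The plan is to construct the embedding by simultaneously realizing both algebras as subalgebras of endomorphisms of a common ``universal'' representation---namely a rank-$r$ bosonic Fock space $\mathcal{F}_r$ defined over $F(\varepsilon_1, \ldots, \varepsilon_r)$---and then to check injectivity, density, and the categorical equivalence separately. On the $W$-algebra side, the action of $\mathfrak{U}(W_k(\mathfrak{gl}_r))'$ on $\mathcal{F}_r$ comes from the Feigin--Frenkel free-field realization of $W_k(\mathfrak{gl}_r)$ as a subalgebra of $r$ coupled Heisenberg vertex algebras. On the $\SH^{\cb}$ side, the action of $\SH^{(r)}$ on the same space comes either from iterating the surjections $\Phi_n$ of Section 2.1 (using $\mathcal{F}_r$ as the tensor product of $r$ polynomial representations of the Cherednik algebras, in the stable limit) or, equivalently, from the geometric action on the equivariant cohomology of Hilbert schemes of points on $\mathbb{C}^2$ with framing. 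The specialization $c_i = \varepsilon_1^i + \cdots + \varepsilon_r^i$ is exactly what identifies the central characters on the two sides.

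Second, I would check that each of the generators $D_{0,l}$ and $D_{\pm 1, l}$ of $\SH^{(r)}$ acts on $\mathcal{F}_r$ by an operator that lies in the image of $\mathfrak{U}(W_k(\mathfrak{gl}_r))'$. For $D_{\pm 1, 0}$ and $D_{0,1}, D_{0,2}$ this is essentially direct, since these correspond to the Heisenberg field and the quadratic stress-energy-type modes. The higher generators $D_{0,l}$, $D_{\pm 1, l}$ are then handled inductively using the commutation relations (\ref{E:def-1})--(\ref{E:def0}) and the fact that commutators of $W$-algebra modes remain in $\mathfrak{U}(W_k(\mathfrak{gl}_r))'$. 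This yields a well-defined map $\Phi : \SH^{(r)} \to \mathfrak{U}(W_k(\mathfrak{gl}_r))'$. Injectivity of $\Phi$ would then follow from faithfulness of the $\SH^{(r)}$-action on a sufficiently rich family of Verma-type modules, which is extracted from the stable limit $\bigcap_n \mathrm{Ker}(\Phi_n) = 0$ of Section 2.1.

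Third, for density one argues graded piece by graded piece. Using the triangular decomposition $\SH^> \otimes \SH^{0,\cb} \otimes \SH^<$ and the PBW-type description of $gr\,\SH^>$ given by Proposition~1.1 and (\ref{E:relDD'}), each graded piece $\SH^{(r)}[d]$ has a computable associated graded, which one matches against Feigin--Frenkel's explicit generator-by-level description of $\mathfrak{U}(W_k(\mathfrak{gl}_r))'[d]$. Density then reduces to producing, inside the image of $\Phi$, the standard $W$-algebra generators $W^{(1)}, \ldots, W^{(r)}$ in their lowest modes, since these topologically generate the whole current algebra. This in turn is a finite, if intricate, computation that can be carried out by pairing the shuffle-algebra realization of $\SH^>$ from Section~2.2 with the free-field formulas for the $W^{(i)}$.

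Finally, for the equivalence of admissible categories I would show that in both settings admissibility is equivalent to the combined conditions that the Cartan/Sekiguchi subalgebra acts locally finitely and that the positive part acts locally nilpotently on every vector. Since $\Phi$ identifies the Cartan and the positive parts (via the embeddings $\SH^{0,\cb}, \SH^> \hookrightarrow \mathfrak{U}(W_k(\mathfrak{gl}_r))'$), both conditions are preserved under restriction along $\Phi$, and density ensures no admissible $\SH^{(r)}$-module admits new $\mathfrak{U}(W_k(\mathfrak{gl}_r))'$-submodules. The main obstacle in this program is unmistakably the density step: matching $\SH^{(r)}$-generators with explicit $W$-algebra modes in arbitrary degree and controlling the topology on the completion is where most of the technical work lies, and it is precisely where the shuffle presentation of Section~2.2 and the free-field description of $W_k(\mathfrak{gl}_r)$ must be brought to bear in tandem.
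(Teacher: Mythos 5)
You should first note that this theorem is not proved in the present paper at all: it is recalled verbatim from \cite[Cor.\ 8.24]{SV-AGT}, and the authors explicitly defer to Section 8 of that reference. So there is no in-paper argument to compare against; what can be said is that your sketch follows the same broad strategy as the cited proof (realize both algebras on a common rank-$r$ Fock space over $F(\varepsilon_1,\ldots,\varepsilon_r)$, the $W$-side via a free-field realization and the $\SH$-side via the stable limit of Cherednik algebras / the equivariant cohomology of instanton moduli, then match generators and central characters $c_i=\varepsilon_1^i+\cdots+\varepsilon_r^i$). In that sense the skeleton is right.

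However, as a proof your proposal has genuine gaps precisely at the points you yourself flag as ``intricate.'' First, the density step is asserted, not argued: producing the $W$-algebra generators $W^{(1)},\ldots,W^{(r)}$ inside the image of $\Phi$ is the technical heart of the matter (in \cite{SV-AGT} it requires a careful degree-by-degree comparison of associated graded dimensions together with an explicit identification of low-order generators), and ``a finite, if intricate, computation'' is not a substitute for it. Second, injectivity does not follow from $\bigcap_n \mathrm{Ker}\,\Phi_n=0$ as stated, because that statement concerns the specialization $\SH^\omega$ with $\omega$ generic, whereas $\SH^{(r)}$ is a different specialization of the central parameters; faithfulness must be established on the specific Fock module $\mathcal{F}_r$. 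Third, the categorical equivalence is not merely ``restriction along a dense embedding is fully faithful'': one must show essential surjectivity, i.e.\ that every admissible $\SH^{(r)}$-module extends to a module over the completed current algebra $\mathfrak{U}(W_k(\mathfrak{gl}_r))'$. This requires controlling the topology of the completion and showing that the $\SH^{(r)}$-action on an admissible module is automatically continuous for it, which is a separate argument and not a formal consequence of density.
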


\vspace{.2in}

\section{Presentation of $\SH^{+}$ and $\SH^{\cb}$}

\vspace{.1in}

\paragraph{\textbf{3.1. Generators and relations for $\SH^+$.}} Consider the $F$-algebra $\tSH^{+}$ generated by elements $\{\tD_{0,l}\;|\; l \geqslant 1\}$ and $\{\tD_{1,k}\;|\; k \geqslant 0\}$
subject to the following set of relations~:
\begin{equation}\label{E:def1}
 [\tD_{0,l}, \tD_{0,k}]=0, \qquad \forall\; l, k \geqslant 1,
\end{equation}
\begin{equation}\label{E:def2}
[\tD_{0,l}, \tD_{1,k}]=\tD_{1,l+k-1}, \qquad \forall\; l \geqslant 1, k \geqslant 0, 
\end{equation}
\begin{equation} \label{E:def3}
\big( 3 [\tD_{1,2}, \tD_{1,1}] - [\tD_{1,3}, \tD_{1,0}] + [\tD_{1,1}, \tD_{1,0}]\big) + \kappa(\kappa-1) ( \tD_{1,0}^2 + [\tD_{1,1}, \tD_{1,0}])=0
\end{equation}
\begin{equation}\label{E:def4}
 [\tD_{1,0}, [\tD_{1,0}, \tD_{1,1}]]=0.
\end{equation}
Let $\tSH^0=F[\tD_{0,1}, \tD_{0,2}, \ldots]$ denote the subalgebra of $\tSH^+$ generated by $\tD_{0,l}, l \geqslant 1$, and let $\tSH^>$ be the subalgebra generated
by $\tD_{1,k}, k \geqslant 0$. The algebras $\tSH^+, \tSH^0, \tSH^>$ are all $\N$-graded, where $\tD_{0,l}$ and $\tD_{1,k}$ are placed in degrees zero and one respectively.
According to the terminology used for $\SH^+$, we call this grading the \textit{rank grading}.

\vspace{.1in}

\begin{theo}\label{T:1} The assignment $\tD_{0,l} \mapsto D_{0,l}, \tD_{1,k} \mapsto D_{1,k}$ for $l \geqslant 1, k \geqslant 0$ induces an isomorphism
of graded $F$-algebras 
$$\phi~: \tSH^+ \stackrel{\sim}{\longrightarrow} \SH^+.$$
\end{theo}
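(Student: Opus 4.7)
The homomorphism $\phi$ is the obvious candidate, and the proof splits into showing it is well-defined, surjective, and injective.

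\textbf{Step 1 (well-definedness).} I would verify that the four relations hold in $\SH^+$ when each tilded symbol is replaced by its $\SH^+$-counterpart. Relation \eqref{E:def1} is the commutativity of the Sekiguchi operators (immediate from \eqref{E:sekiguchi1}), and \eqref{E:def2} is exactly \eqref{E:sekiguchi2}, valid also at $k=0$ by the definition $D_{1,l}=[D_{0,l+1},D_{1,0}]$. The quadratic relation \eqref{E:def3} and the cubic Serre-type relation \eqref{E:def4} involve only rank-one elements, so by Theorem~2.2 they may be tested inside the shuffle algebra $S_{g(z)}$, where each $D_{1,l}$ corresponds to $z^l \in F[z]$. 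Both then reduce to finite symmetric-polynomial identities driven by $g(z)=h(z)/z$, with \eqref{E:def4} being the standard cubic triangular identity in a Feigin-type shuffle algebra.

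\textbf{Step 2 (surjectivity).} By the triangular decomposition \eqref{E:triangSH}, $\SH^+$ is generated by $\SH^0$ and $\SH^>$. Since $\SH^0=F[D_{0,1},D_{0,2},\ldots]$ is generated by the $D_{0,l}$, and $\SH^>$ is by definition generated by the $D_{1,k}$, both factors lie in the image of $\phi$.

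\textbf{Step 3 (injectivity).} Here is the main work. The plan is to exhibit a PBW-type spanning set of $\tSH^+$ whose $\phi$-image is the PBW basis of $\SH^+$ furnished by Proposition~1.1. Concretely: (i) in $\tSH^>$, inductively define elements $\tD_{r,d}$ for $r\geqs 1,\; d\geqs 0$ by iterated commutators of the $\tD_{1,k}$, modeled on the formula $D'_{r,d}=\operatorname{ad}(\tD_{0,2})^d(\tD_{r,0})$ together with \eqref{E:relDD'}; (ii) put an ascending filtration on $\tSH^>$ parallel to the order filtration on $\SH^>$, characterized through iterated commutators with the degree-zero elements $\tD_{r,0}$; (iii) show, using only \eqref{E:def3}--\eqref{E:def4} and their $\operatorname{ad}(\tD_{0,l})$-translates through \eqref{E:def2}, that the images of the $\tD_{r,d}$ in $gr\,\tSH^>$ pairwise commute and that ordered monomials in them span $gr\,\tSH^>$. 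The resulting chain of surjections $F[\tD_{r,d}]\twoheadrightarrow gr\,\tSH^>\twoheadrightarrow gr\,\SH^>=F[D_{r,d}]$ onto a free polynomial algebra must collapse to isomorphisms, which forces $gr\,\phi$, and hence $\phi|_{\tSH^>}$, to be injective. The semi-direct structure $\tSH^>\otimes\tSH^0\simeq\tSH^+$ (a direct consequence of \eqref{E:def2}) together with the trivial analogue for $\SH^0$ then upgrades this to the injectivity of $\phi$.

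\textbf{Main obstacle.} The difficult point is Step~3(iii): closing the induction that shows all higher commutators $[\tD_{r,d},\tD_{s,e}]$ lie in a strictly smaller filtration piece. Relation \eqref{E:def4} provides the seed $[\tD_{1,0},\tD_{2,0}]=0$ and \eqref{E:def3} organizes the rank-two quadratic commutators, but these must be bootstrapped --- via Jacobi and repeated $\operatorname{ad}(\tD_{0,l})$ --- into all higher Serre-type identities. This is directly analogous to the step in Drinfeld's new realization of Yangians where all higher Serre relations are shown to follow from the two initial ones, and is where I would expect Section~4 to do most of its work.
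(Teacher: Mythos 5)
Your outline coincides with the paper's strategy: check the relations in $\SH^+$ (the paper verifies \eqref{E:def3} via Pieri rules or the shuffle realization and \eqref{E:def4} via $[[D_{1,1},D_{1,0}],D_{1,0}]=[D_{2,0},D_{1,0}]=0$), reduce injectivity to $\tSH^>\to\SH^>$ by the triangular decomposition, equip $\tSH^>$ with a filtration modelled on the order filtration, and compare associated graded algebras with the free polynomial algebra $gr\SH^>$, so that the chain of surjections collapses to isomorphisms. Steps 1 and 2 are fine. The problem is that everything you defer to the ``main obstacle'' is the actual content of the theorem, and your proposal does not carry it out: asserting that the higher relations ``bootstrap'' from \eqref{E:def3}--\eqref{E:def4} by Jacobi and $ad(\tD_{0,l})$ is a statement of hope, not an argument. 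As written this is a correct plan, not a proof.

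Concretely, three things are missing. (a) A base case: the paper first proves injectivity in rank $2$ directly in the shuffle algebra, by showing every rank-two relation in $\SH^>$ is an $F[ad\,\tD_{0,l}]$-translate of \eqref{E:def3}; your induction has no anchor without this. (b) Commutativity of $gr\tSH^>$ in rank $r$ (Lemma~\ref{L:1}): this requires the filtration to be given by an explicit inductive recipe (spans of products of lower-rank pieces together with $ad(\tD_{1,l})$-images), so that a nested induction on the rank of the first factor closes; your alternative characterization by vanishing of iterated commutators with the $\tD_{s,0}$ is circular at this stage, since exhaustiveness and compatibility with $\overline{\phi}$ are exactly what is at issue before the relations among the $\tD_{s,0}$ are known. (c) The degree-zero and higher-degree spanning statements: $[\tD_{l,0},\tD_{l',0}]=0$ for $l+l'=r$ (Lemma~\ref{L:2}) does not follow from the recursion $[\tD_{l,0},\tD_{l',0}]=-\tfrac{l'}{l-1}[\tD_{l-1,0},\tD_{l'+1,0}]$ alone when $r$ is odd; one needs a second, independent relation obtained by applying $ad(\tD_{2,1})$ and invoking $[D_{k,1},D_{l,0}]=klD_{k+l,0}$ in lower rank, yielding a solvable $2\times 2$ system. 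Likewise, the one-dimensionality of the quotient $U_{r,d}$ is proved by an induction on $d$ that hinges on the invertibility of an explicit $(d+2)\times(d+2)$ matrix assembled from inclusions obtained by applying $ad(\tD_{0,2})$ and $ad(\tD_{0,d+2})$ to lower-degree identities. None of these mechanisms appear in your proposal, and they are not automatic consequences of a Yangian-style Serre-relation analogy.
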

Obviously, the map $\phi$ restricts to isomorphisms $\tSH^0 \simeq \SH^0, \tSH^> \simeq \SH^>$. Note however that $\tSH^>$ is \textit{not}
generated by the elements $\tD_{1,k}$ with the sole relations (\ref{E:def3}, \ref{E:def4}). Theorem~\ref{T:1} is proved in Section 4.

\vspace{.1in}

\paragraph{\textbf{3.2. Generators and relations for $\SH^{\cb}$.}} For the reader's convenience, we write down the presentation of $\SH^{\cb}$, an immediate corollary
of Theorem~\ref{T:1} above. Let $\tSH^{\cb}$ be the algebra generated by elements $\{\tD_{0,l}\;|\; l \geqslant 1\}$, $\{\tD_{\pm 1, k}\;|\; k \geqslant 0\}$
and $\{\tilde{\cb}_i\;|\; i \geqslant 0\}$ subject to the following set of relations~:

\begin{equation}\label{E:ndef1}
 [\tD_{0,l}, \tD_{0,k}]=0, \qquad \forall\; l, k \geqslant 1,
\end{equation}
\begin{equation}\label{E:ndef2}
[\tD_{0,l}, \tD_{1,k}]=\tD_{1,l+k-1}, \qquad [\tD_{-1,k}, \tD_{0,l}]=\tD_{-1,l+k-1} \qquad \forall\; l \geqslant 1, k \geqslant 0, 
\end{equation}
\begin{equation} \label{E:ndef3}
\big( 3 [\tD_{1,2}, \tD_{1,1}] - [\tD_{1,3}, \tD_{1,0}] + [\tD_{1,1}, \tD_{1,0}]\big) + \kappa(\kappa-1) ( \tD_{1,0}^2 + [\tD_{1,1}, \tD_{1,0}])=0
\end{equation}
\begin{equation} \label{E:nd2ef3}
\big( 3 [\tD_{-1,2}, \tD_{-1,1}] - [\tD_{-1,3}, \tD_{-1,0}] + [\tD_{-1,1}, \tD_{-1,0}]\big) + \kappa(\kappa-1) ( -\tD_{1,0}^2 + [\tD_{-1,1}, \tD_{-1,0}])=0
\end{equation}
\begin{equation}\label{E:ndef4}
 [\tD_{1,0}, [\tD_{1,0}, \tD_{1,1}]]=0, \qquad [\tD_{-1,0},[\tD_{-1,0}, \tD_{-1,1}]]=0,
\end{equation}
\begin{equation}\label{E:ndef0}
 [\tD_{-1,k}, \tD_{1,l}]=\tilde{E}_{k+l}, \qquad l, k \geqs 0,
\end{equation}
where the $\tilde{E}_{l}$ are defined by the formula (\ref{E:rel-11}).

\vspace{.1in}

\begin{theo}\label{T:2} The assignment $\tD_{0,l} \mapsto D_{0,l}, \tD_{\pm 1,k} \mapsto D_{\pm 1,k}$ for $l \geqslant 1, k \geqslant 0$ and $\tilde{\cb}_i \mapsto \cb_i$ for $i \geqslant 0$
induces an isomorphism
of $F$-algebras 
$$\phi~: \tSH^{\cb} \stackrel{\sim}{\longrightarrow} \SH^{\cb}.$$
\end{theo}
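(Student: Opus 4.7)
The plan is to reduce Theorem~\ref{T:2} to Theorem~\ref{T:1} via the triangular decomposition $\SH^> \otimes \SH^{0,\cb} \otimes \SH^< \simeq \SH^{\cb}$ recalled in Section~1.4.

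First I would check that $\phi$ is a well-defined surjection. Well-definedness amounts to verifying each of (\ref{E:ndef1})--(\ref{E:ndef0}) in $\SH^{\cb}$: the ``$+$'' relations (\ref{E:ndef1}), (\ref{E:ndef3}) and the first halves of (\ref{E:ndef2}), (\ref{E:ndef4}) hold in $\SH^+\subset\SH^{\cb}$ by Theorem~\ref{T:1}; the ``$-$'' relations (\ref{E:nd2ef3}) and the second halves of (\ref{E:ndef2}), (\ref{E:ndef4}) hold in $\SH^<$ by the evident opposite-algebra variant of Theorem~\ref{T:1}; and (\ref{E:ndef0}) is literally the defining relation (\ref{E:def0})--(\ref{E:rel-11}) of $\SH^{\cb}$. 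Surjectivity is immediate since all generators of $\SH^{\cb}$ are in the image of $\phi$.

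For injectivity, I would build a triangular decomposition of $\tSH^{\cb}$ mirroring that of $\SH^{\cb}$. Let $\tSH^<$ be the subalgebra of $\tSH^{\cb}$ generated by the $\tD_{-1,k}$, and set $\tSH^{0,\cb}=\tSH^0\otimes F[\tilde{\cb}_0, \tilde{\cb}_1, \ldots]$, using that the $\tilde{\cb}_i$ are central. Theorem~\ref{T:1} and its opposite-algebra variant yield $\phi|_{\tSH^>}:\tSH^>\simeq\SH^>$ and $\phi|_{\tSH^<}:\tSH^<\simeq\SH^<$, while $\phi|_{\tSH^{0,\cb}}:\tSH^{0,\cb}\simeq\SH^{0,\cb}$ is clear. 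Consequently, the composition
$$\phi\circ m~:~\tSH^>\otimes\tSH^{0,\cb}\otimes\tSH^< \longrightarrow \SH^{\cb}$$
of the multiplication map $m$ with $\phi$ agrees, via these isomorphisms, with the multiplication of $\SH^{\cb}$, which is a linear isomorphism by Section~1.4. Hence $m$ is injective and $\phi$ is injective on the image of $m$; it thus suffices to show $m$ is surjective.

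Surjectivity of $m$ is a PBW-type normal form argument. Any monomial in $\tD_{\pm 1,\bullet}, \tD_{0,\bullet}, \tilde{\cb}_\bullet$ can be brought into the form $(\text{word in }\tD_{1,\bullet})\cdot(\text{word in }\tD_{0,\bullet}, \tilde{\cb}_\bullet)\cdot(\text{word in }\tD_{-1,\bullet})$ by applying: the centrality of the $\tilde{\cb}_i$; relations (\ref{E:ndef1}) and (\ref{E:ndef2}) to push each $\tD_{0,l}$ past any $\tD_{\pm 1,\bullet}$ toward the middle (modulo further $\tD_{\pm 1,\bullet}$ terms); and (\ref{E:ndef0}) to swap any $\tD_{-1,k}$ standing to the left of a $\tD_{1,l}$, producing a residue $\tilde E_{k+l}\in\tSH^{0,\cb}$ given by (\ref{E:rel-11}). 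The main obstacle is making this normal-form reduction rigorous: one must track that the residues $\tilde E_{k+l}$, while central in $\SH^{\cb}$ only after composition with $\phi$ and not literally so in $\tSH^{\cb}$, lie in $\tSH^{0,\cb}$ and can therefore be moved past further $\tD_{\pm 1,\bullet}$ factors using (\ref{E:ndef2}) without escaping $\tSH^{0,\cb}$. Because each application of (\ref{E:ndef0}) strictly decreases the number of ``$-1$ before $+1$'' inversions (the $\tSH^{0,\cb}$-residues contributing none), an induction on inversions terminates and closes the argument. Once this is in hand, $m$ is an isomorphism and $\phi$ is injective on all of $\tSH^{\cb}$, completing the proof.
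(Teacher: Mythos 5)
Your proposal is correct and follows exactly the route the paper intends: the paper states Theorem~\ref{T:2} as an immediate corollary of Theorem~\ref{T:1}, leaving to the reader precisely the reduction you spell out, namely the triangular decompositions $\SH^>\otimes\SH^{0,\cb}\otimes\SH^<\simeq\SH^{\cb}$ and its analogue for $\tSH^{\cb}$ together with the PBW-type normal-form argument. Your treatment of the residues $\tilde E_{k+l}\in\tSH^{0,\cb}$ and the termination of the straightening procedure is the right way to make the word ``immediate'' honest.
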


Coupled with the Theorems in Section~2.3., this provides a potential 'generators and relations' approach to the study of the category of admissible modules over the W-algebras $W_k(\mathfrak{gl}_r)$.

\vspace{.2in}

\section{Proof of Theorem~\ref{T:1}}

\vspace{.1in}

\paragraph{\textbf{4.1.}} Let us first observe that $\phi$ is a well-defined algebra map, i.e. that relations (\ref{E:def1}--\ref{E:def4}) hold in $\SH^+$.
For (\ref{E:def1}, \ref{E:def2}) this follows from the definition of $\SH^+$ and \cite[(1.38)]{SV-AGT}. Equation (\ref{E:def3}) may be checked directly, e.g.
from the Pieri rules (see \cite[(1.26)]{SV-AGT}), or from the shuffle realization of $\SH^>$ (see \textbf{4.2.} below). As for equation (\ref{E:def4}), we have
by \cite[(1.35)]{SV-AGT}, $[[D_{1,1}, D_{1,0}],D_{1,0}]=[D_{2,0}, D_{1,0}]=0$. The map $\phi$ is surjective by construction; in the rest of the
proof, we show that it is injective as well.

\vspace{.1in}

\paragraph{\textbf{4.2.}} Using relation (\ref{E:def2}) it is easy to see that any monomial in the generators $\tD_{0,l}, \tD_{1,k}$ may be expressed
as a linear combination of similar monomials, in which all $\tD_{0,l}$ appear on the right of all $\tD_{1,k}$. Hence the multiplication
map $\tSH^> \otimes \tSH^0 \to \tSH^+$ is surjective. Since $\phi$ clearly restricts to an isomorphism $\tSH^0 \simeq \SH^0$ we only have to
show, by (\ref{E:triangSH}), that $\phi$ restricts to an isomorphism $\tSH^> \simeq \SH^>$. Our strategy will be to construct a suitable filtration
on $\tSH^>$ mimicking the order filtration of $\SH^>$ and to pass to the associated graded algebras.

\vspace{.1in}

\paragraph{\textbf{4.3.}} We begin by proving directly, using the shuffle realization of $\SH^>$, that $\phi$ is an isomorphism in ranks one and two.
This is obvious in rank one since $\phi$ is a graded map and the only relation in rank one is (\ref{E:def2}).

Suppose $\sum \alpha_i D_{1,k_i}D_{1,l_i}=0$ is a relation in rank two. The shuffle realization then implies $\sum \alpha_i z^{k_i}\star z^{l_i}=0$ so that $$h(z_1-z_2)\big(\sum \alpha_i z^{k_i}_1z^{l_i}_2\big)=h(z_2-z_1)\big(\sum\alpha_i z_1^{l_i}z_2^{k_i}\big).$$ Therefore $\sum \alpha_i z^{k_i}_1z^{l_i}_2=h(z_2-z_1)P(z_1,z_2)$ where $P(z_1,z_2)$ is
some symmetric polynomial in $z_1, z_2$. Hence $\sum \alpha_i z^{k_i}_1z^{l_i}_2$ is a linear combination of polynomials of the form $h(z_2-z_1)(z_1^kz_2^l+z_1^lz_2^lk)$ so that $\sum \alpha_i D_{1,k_i}D_{1,l_i}$ is a linear combination of expressions of the form 
\begin{equation}\label{E:rank2}
\begin{split}
3[D_{1,l+2},D_{1,k+1}]-&3[D_{1,l+1},D_{1,k+2}]-[D_{1,l+3},D_{1,k}]+[D_{1,l},D_{1,k+3}]+[D_{1,l+1},D_{1,k}]-[D_{1,l},D_{1,k+1}]
\\&+ \kappa(\kappa-1)(D_{1,k}D_{1,l}+D_{1,l}D_{1,k}+[D_{1,l+1},D_{1,k}]-[D_{1,l},D_{1,k+1}].\end{split}
\end{equation}
If $I$ denotes the image of (\ref{E:def3}) under the action of $F[ad \tD_{0,2}, ad \tD_{0,3}, \ldots]$ then using
(\ref{E:def2}) we see that each such expression lies in $\phi(I)$ so that $\phi$ is indeed an isomorphism in rank two.

\vspace{.1in}

We remark that the relations (\ref{E:rank2}) may be written in a more standard way using the generating functions
$D(z)=\sum_{l} D_{1,l}z^{-l}$ as follows :
\begin{equation}\label{E:def3'}
k(z-w)D(z)D(w)=-k(w-z)D(w)D(z)
\end{equation}
where $k(u)=(u-1+\kappa)(u+1)(u-\kappa)=-h(-u)$.
In particular, the defining relation (\ref{E:def3}) may be replaced by the above (\ref{E:def3'}), of which it is a special
case.

\vspace{.1in}

\paragraph{\textbf{4.4.}} We now turn to the definition of the analog, on $\tSH^>$, of the order filtration on $\SH^>$. We will proceed
by induction on the rank $r$. For $r=1, d \geqs 0$, we set
$$\tSH^>[1, \leqslant d]=\bigoplus_{k \leqs d} F \tD_{1,k}.$$
Assuming that $\tSH^>[r', \leqs d']$ has been defined for all $r'<r$ we let $\tSH^>[r, \leqs d]$ be the subspace spanned by all products 
$$\tSH^>[r', \leqs d']\cdot \tSH^>[r'', \leqs d''], \qquad r'+r''=r, d' + d''=d$$
and by the spaces
$$ad(\tD_{1,l}) \big( \tSH^>[r-1, \leqs d -l +1]\big), \qquad l=0, \ldots, d+1.$$
From the above definition, it is clear that $\tSH^>$ is a $\Z$-filtered algebra. Note that it is not obvious at the moment that
$\tSH^>[r, \leqs d] =\{0\}$ for $d <0$. Because the associated graded $gr\SH^>$ is commutative, it follows
by induction on the rank $r$ that $\phi : \tSH^> \to \SH^>$ is a morphism of filtered algebras. We denote by $gr\tSH^>$ the associated graded
of $\tSH^>$ and we let $\overline{\phi} : gr\tSH^> \to gr\SH$ be the induced map. The map $\overline{\phi}$ is graded with respect to both
rank and order. Moreover $\overline{\phi}$ is an isomorphism in ranks 1 and 2 (indeed, that the filtration as defined above coincides with the order 
filtration in rank 2 can be seen directly from \cite[(1.84)]{SV-AGT}). The rest of the proof of Theorem~\ref{T:1} consists
in checking that $\overline{\phi}$ is an isomorphism. Once more, we will argue by induction. So in the remainder of the proof, \textit{we fix
an integer $r \geqs 3$ and assume that $\overline{\phi}$ is an isomorphism in ranks} $r' <r$. 

\vspace{.1in}

\paragraph{\textbf{4.5.}} By our assumption above,  the algebra $gr\tSH^>$
is commutative in ranks less than $r$, that is $ab=ba$ whenever $rank(a) + rank (b) <r$. Our first task is
to extend this property to the rank $r$.

\vspace{.1in}

\begin{lem}\label{L:1} The algebra $gr\tSH^>$ is commutative in rank $r$.
\end{lem}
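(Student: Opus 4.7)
The plan is to show that $[a,b] \in \tSH^>[r, \leqs d_1 + d_2 - 1]$ whenever $a \in \tSH^>[r_1, \leqs d_1]$ and $b \in \tSH^>[r_2, \leqs d_2]$ with $r_1 + r_2 = r$. The outer induction on $r$ is already in force, and we add an inner induction on $m = \min(r_1, r_2)$. The case $m = 0$ (one factor a scalar) is trivial.

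For the base $m = 1$, we may take $a = \tD_{1,l}$ by bilinearity. A general $b \in \tSH^>[r-1, \leqs d_2]$ decomposes, via the filtration definition, as a sum of products $b_1 b_2$ (with $rank(b_i) \geqs 1$) and commutators $[\tD_{1,l'}, c]$ (with $c \in \tSH^>[r-2, \leqs d_2 - l' + 1]$). Product summands are handled by the Leibniz rule: each inner commutator $[\tD_{1,l}, b_i]$ has total rank $1 + rank(b_i) < r$, so the outer induction hypothesis yields the required order drop, which propagates through the surrounding product. Commutator summands $[\tD_{1,l}, [\tD_{1,l'}, c]]$ are controlled by two successive applications of the filtration definition: $[\tD_{1,l'}, c] \in \tSH^>[r-1, \leqs l' + ord(c) - 1]$, whence $[\tD_{1,l}, [\tD_{1,l'}, c]] \in \tSH^>[r, \leqs l + l' + ord(c) - 2] \subseteq \tSH^>[r, \leqs l + d_2 - 1]$. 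No Jacobi expansion is needed at this stage.

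For the inductive step $m \geqs 2$, take WLOG $r_1 = m \leqs r_2$ and split $a$ into products and commutators. Product contributions reduce via Leibniz and the outer induction, as in the base. For a commutator contribution $a = [\tD_{1,l}, a']$ with $a' \in \tSH^>[r_1-1, \leqs d_1-l+1]$, the Jacobi identity
$$[[\tD_{1,l}, a'], b] = [\tD_{1,l}, [a', b]] - [a', [\tD_{1,l}, b]]$$
splits the target into two manageable pieces. In the first term, $[a', b]$ has total rank $r - 1 < r$, so outer induction places it in $\tSH^>[r-1, \leqs ord(a') + d_2 - 1]$, and a subsequent $ad(\tD_{1,l})$ lands, via the filtration, in $\tSH^>[r, \leqs d_1 + d_2 - 1]$. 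In the second term, $[\tD_{1,l}, b]$ has rank $1 + r_2 \leqs r - 1 < r$ (using $r_1 \geqs 2$), so outer induction gives $[\tD_{1,l}, b] \in \tSH^>[r_2+1, \leqs l + d_2 - 1]$; the remaining bracket $[a', [\tD_{1,l}, b]]$ has ranks $(r_1-1, r_2+1)$ summing to $r$ with $\min = m - 1$, so the inner induction hypothesis closes it.

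The key design feature that makes everything close is that the filtration is rigged so that each single application of $ad(\tD_{1,\ast})$ already drops order by one. The base case $m = 1$ exploits this directly via the double-$ad$ structure; this is the crux of the argument, since any attempt at Jacobi expansion there simply reproduces $[a,b]$ itself and yields a tautology. Once the base case is in hand, the inductive step uses Jacobi to peel off one layer of nesting from $a$, with the outer induction on rank controlling one of the resulting terms and the inner induction on $m$ handling the other.
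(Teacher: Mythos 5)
Your proof is correct and is essentially the paper's argument: both induct on the rank of the factor being decomposed, split it into the product and $ad(\tD_{1,l})$ cases supplied by the filtration's definition, and use Jacobi plus the outer rank induction to close the commutator case (the paper inducts on $r_1$ rather than $\min(r_1,r_2)$, which is the same thing up to antisymmetry). The only superfluous step is your base case: there is no need to decompose $b$, since $ad(\tD_{1,l})\big(\tSH^>[r-1,\leqs d_2]\big) \subseteq \tSH^>[r,\leqs d_2+l-1]$ is literally one of the generating subspaces in the definition of the filtration, which is how the paper dispatches $r_1=1$ in one line.
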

\begin{proof} We have to show that for $a \in \tSH^>[r_1, \leqs d_1], b \in \tSH^>[r_2, \leqs d_2]$ and
$r_1+r_2=r$ we have
\begin{equation}\label{E:proof1}
[a,b] \in \tSH^>[r, \leqs d_1+d_2-1].
\end{equation}
We argue by induction on $r_1$. If $r_1=1$ then
(\ref{E:proof1}) holds by definition of the filtration. Now let $r_1 >1$ and let us further assume that
(\ref{E:proof1}) is valid for all $r_1', r_2'$ with $r_1'+r_2'=r$ and $r_1'<r_1$. We will now prove (\ref{E:proof1})
for $r_1,r_2$, thereby completing the induction step. According to the definition of the filtration, there
are two cases to consider~:

\vspace{.05in}

\noindent
Case 1) We have $a=a_1a_2$ with $a_1 \in \tSH^>[s', \leqs d'], a_2 \in \tSH^>[s'', \leqs d'']$ such that $s'+s''=r_1, d' + d''=d_1$. Then
$[a,b]=a_1[a_2,b]+ [a_1,b]a_2$. By our induction hypothesis on $r$, $[a_2,b] \in \tSH^>[s''+r_2, \leqs d''+d_2-1]$ hence
$a_1[a_2,b] \in \tSH^>[r, \leqs d_1+d_2-1]$. The term $[a_1,b]a_2$ is dealt with in a similar fashion.
 
\vspace{.05in}

\noindent
Case 2) We have $a=[\tD_{1,l}, a']$ with $a' \in \tSH^>[r_1-1, \leqs d_1-l+1]$. Then
$[a,b]=[[\tD_{1,l}, a'],b]=[\tD_{1,l}, [a',b]]-[a', [\tD_{1,l}, b]].$ By our induction hypothesis
on $r$, $[a',b] \in \tSH^>[r_1+r_2-1, \leqs d_1+d_2-l]$ hence $[\tD_{1,l}, [a',b]] \in \tSH^>[r, \leqs d_1+d_2-1]$.
Similarly, $[\tD_{1,l}, b] \in \tSH^>[r_2+1, \leqs d_2+l-1]$. The inclusion $[a', [\tD_{1,l}, b]] \in \tSH^>[r, \leqs d_1+d_2-1]$
now follows from the induction hypothesis on $r_1$.

\vspace{.05in}

We are done.
\end{proof}

\vspace{.1in}

\paragraph{\textbf{4.6.}} We now focus on the filtered piece of order $\leqs 0$ of $\tSH^>$. We inductively define elements $\tD_{l,0}$ for $l \geqslant 2$ by
$$\tD_{l,0}=\frac{1}{l-1} [\tD_{1,1}, \tD_{l-1,0}].$$
From \cite[(1.35)]{SV-AGT} we have $\phi(\tD_{l,0})=D_{l,0}$. Since we assume are assuming that $\overline{\phi}$ is an isomorphism
in ranks less than $r$, we have $[\tD_{l,0}, \tD_{l',0}]=0$ whenever $l+l'<r$.

\vspace{.1in}

\begin{lem}\label{L:2}
We have $[\tD_{l,0}, \tD_{l',0}]=0$ for $l + l'=r$.
\end{lem}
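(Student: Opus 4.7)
To begin, set $c_l := [\tD_{l,0}, \tD_{r-l,0}]$ for $1 \le l \le r-1$. My plan is first to express each $c_l$ as an explicit scalar multiple of $c_1$ via a Jacobi recurrence, then to argue that $c_1 = 0$ by combining antisymmetry with the cubic relation~(\ref{E:def3}). Specifically, for $l \ge 2$ I would expand $\tD_{l,0} = \frac{1}{l-1}[\tD_{1,1}, \tD_{l-1,0}]$ and apply the Jacobi identity to obtain
\[
(l-1)\,c_l \;=\; [\tD_{1,1},\,[\tD_{l-1,0}, \tD_{r-l,0}]] + [[\tD_{1,1}, \tD_{r-l,0}],\,\tD_{l-1,0}].
\]
The first summand vanishes by the induction hypothesis, since the inner bracket has rank $r-1 < r$; the second equals $(r-l)\,[\tD_{r-l+1,0},\,\tD_{l-1,0}] = -(r-l)\,c_{l-1}$. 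Iterating the recurrence $(l-1)\,c_l = -(r-l)\,c_{l-1}$ yields the closed form $c_l = (-1)^{l-1}\tbinom{r-2}{l-1}\,c_1$.

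Next, combining this closed form with the antisymmetry $c_l = -c_{r-l}$ gives the constraint $\bigl[(-1)^{l-1} + (-1)^{r-l-1}\bigr]\tbinom{r-2}{l-1}\,c_1 = 0$. When $r$ is even the bracket equals $\pm 2$ (equivalently, the middle case $c_{r/2} = [\tD_{r/2,0}, \tD_{r/2,0}] = 0$ already yields the same conclusion via the nonvanishing binomial coefficient), and I immediately deduce $c_1 = 0$, whence all $c_l$ vanish.

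The main obstacle is the case $r$ odd, in which the bracket above vanishes identically and antisymmetry yields no information on $c_1$. Here I would invoke the cubic relation~(\ref{E:def3}): applying $\mathrm{ad}(\tD_{r-2,0})$ to it, then using the induction hypothesis to discard all rank-$<r$ commutators, and using Lemma~\ref{L:1} to place the surviving rank-$r$ commutators inside the (at most) one-dimensional space $F\cdot c_1 = \tSH^>[r,\le -1]$, one should arrive at a single scalar equation $\mu(\kappa)\,c_1 = 0$. The pieces $\tD_{2,0}$ and $\kappa(\kappa-1)\tD_{2,0}$ of (\ref{E:def3}) contribute a multiple $(1+\kappa(\kappa-1))\,c_{r-2} = (1+\kappa(\kappa-1))(-1)^{r-3}(r-2)\,c_1$ via the closed form, while the quadratic commutator terms $3[\tD_{1,2},\tD_{1,1}]$ and $-[\tD_{1,3},\tD_{1,0}]$ must be expanded using the derived identity $[\tD_{r-2,0}, \tD_{1,k}] = [\tD_{1,0}, \tD_{r-2,k}]$ (valid from the IH since $r-1 < r$) together with further Jacobi manipulations. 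The technical heart of the argument is then to verify that, after all simplifications, the coefficient $\mu(\kappa)$ does not vanish for generic $\kappa$, thereby forcing $c_1 = 0$.
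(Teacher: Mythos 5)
Your recurrence $(l-1)\,c_l=-(r-l)\,c_{l-1}$ is exactly the paper's identity (\ref{E:proof2}), and your treatment of even $r$ (reduce to the middle bracket $[\tD_{k,0},\tD_{k,0}]=0$) is the paper's argument verbatim. The problem is the odd case, which is the entire content of the lemma beyond the recurrence, and there your proposal has a genuine gap: it is a plan whose decisive step is explicitly left unverified. You would need to show that applying $\mathrm{ad}(\tD_{r-2,0})$ to (\ref{E:def3}) produces an equation $\mu(\kappa)c_1=0$ with $\mu\neq 0$, but you do not compute $\mu$, and the computation is not routine. In particular, the terms $[\tD_{r-2,0},[\tD_{1,2},\tD_{1,1}]]$ and $[\tD_{r-2,0},[\tD_{1,3},\tD_{1,0}]]$ do \emph{not} land in $\tSH^>[r,\leqs -1]=F c_1$: by Lemma~\ref{L:1} they only have order $\leqs 1$ and $\leqs 2$ respectively, so reducing them modulo $Fc_1$ requires explicit knowledge of the rank-$(r-1)$ brackets $[\tD_{r-2,0},\tD_{1,a}]$ as polynomials in the generators and then a further rank-$r$ computation that is not controlled by the induction hypothesis. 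The auxiliary identity you invoke to do this, $[\tD_{r-2,0},\tD_{1,k}]=[\tD_{1,0},\tD_{r-2,k}]$, is not a consequence of the induction hypothesis and is in fact false in $\SH^>$ already to leading order (compare $[x^{r-2},xD^k]$ with $[x,x^{r-2}D^k]$ in the $W_{1+\infty}$ degeneration). Finally, note that for $r=3$ the relation actually needed is the cubic relation (\ref{E:def4}), i.e.\ $[\tD_{1,0},[\tD_{1,0},\tD_{1,1}]]=0$, which your write-up never uses; (\ref{E:def3}) alone does not give $[\tD_{1,0},\tD_{2,0}]=0$.

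The paper closes the odd case $r=2k+1$ quite differently and more cheaply: it produces a \emph{second} linear relation among the $c_l$, independent of the recurrence, by applying a rank-two raising operator to a vanishing lower-rank bracket. Concretely, applying $\mathrm{ad}(\tD_{1,1})$ to $[\tD_{k+1,0},\tD_{k-1,0}]=0$ gives $(k+1)[\tD_{k+2,0},\tD_{k-1,0}]+(k-1)[\tD_{k+1,0},\tD_{k,0}]=0$ (this is just your recurrence again), while applying $\mathrm{ad}(\tD_{2,1})$ to $[\tD_{k,0},\tD_{k-1,0}]=0$ and using $[D_{s,1},D_{l,0}]=sl\,D_{s+l,0}$ (valid in $\tSH^>$ by the rank induction hypothesis, since all ranks involved are $<r$ when $k\geqs 2$) gives $k[\tD_{k+2,0},\tD_{k-1,0}]+(k-1)[\tD_{k,0},\tD_{k+1,0}]=0$. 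These two relations together force both brackets to vanish, and then the recurrence kills all the $c_l$. If you want to keep your framework, the fix is to replace your $\mathrm{ad}(\tD_{r-2,0})$-of-(\ref{E:def3}) step with this $\mathrm{ad}(\tD_{2,1})$ argument; as written, your proof of the odd case is incomplete.
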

\begin{proof} If $r=3$ this reduces to the cubic relation (\ref{E:def4}). For $r=4$ we have to consider
\begin{equation*}
 \begin{split}
  [\tD_{3,0}, \tD_{1,0}]&= \frac{1}{2} [[ \tD_{1,1}, \tD_{2,0}], \tD_{1,0}] \\
&= \frac{1}{2} [\tD_{1,1}, [\tD_{2,0}, \tD_{1,0}]] - \frac{1}{2} [\tD_{2,0}, [\tD_{1,1}, \tD_{1,0}]]\\
&=-\frac{1}{2} [ \tD_{2,0}, \tD_{2,0}]=0.
 \end{split}
\end{equation*}

Now let us fix $l, l'$ with $l+l'=r$. We have 
\begin{equation}\label{E:proof2}
 \begin{split}
  [\tD_{l,0}, \tD_{l',0}]&=\frac{1}{l-1} [[ \tD_{1,1}, \tD_{l-1,0}], \tD_{l',0}]\\
&= \frac{1}{l-1} [\tD_{1,1}, [\tD_{l-1,0}, \tD_{l',0}]] - \frac{1}{l-1} [\tD_{l-1,0}, [\tD_{1,1}, \tD_{l',0}]]\\
&=-\frac{l'}{l-1} [\tD_{l-1,0}, \tD_{l'+1,0}].
 \end{split}
\end{equation}
If $r=2k$ is even then by repeated use of (\ref{E:proof2}) we get
$$[\tD_{l,0}, \tD_{l',0}]=c[ \tD_{k}, \tD_k]=0$$
for some constant $c$. Next, suppose that $r=2k+1$ is odd, with $k \geqs 2$. Applying $ad(\tD_{1,1})$ to
$[\tD_{k+1,0}, \tD_{k-1,0}]=0$ yields the relation
\begin{equation}\label{E:proof4}
 (k+1) [\tD_{k+2,0}, \tD_{k-1,0}]+ (k-1) [\tD_{k+1, 0}, \tD_{k,0}]=0.
\end{equation}
Similarly, applying $ad(\tD_{2,1})$ to $[\tD_{k,0}, \tD_{k-1,0}]=0$ and using the relation
$[D_{k,1}, D_{l,0}]=kl D_{l+k,0}$ in $\SH^>$ (see \cite[(1.91), (8.47)]{SV-AGT}) we obtain the relation
 \begin{equation}\label{E:proof5}
 k [\tD_{k+2,0}, \tD_{k-1,0}]+ (k-1) [\tD_{k, 0}, \tD_{k+1,0}]=0.
\end{equation}
Equations (\ref{E:proof4}) and (\ref{E:proof5}) imply that $[\tD_{k+2,0}, \tD_{k-1,0}]=[\tD_{k+1,0}, \tD_{k,0}]=0$.
The general case of $[\tD_{l,0}, \tD_{l',0}]=0$ is now deduced, as in the case $r=2k$, from repeated use of
(\ref{E:proof2}).
\end{proof}

\vspace{.1in}

Note that Lemma~\ref{L:2} above implies that $\tSH^>[r,\leqs -1]=\{0\}$.

\vspace{.1in}

\paragraph{\textbf{4.7.}} Recall that $gr\SH^>$ is a free polynomial algebra in generators in the generators $D'_{s,d}$
for $s \geqs 1, d \geqs 0$. In order to prove that $\overline{\phi}$ is an isomorphism in rank $r$, it suffices, in virtue
of Lemma~\ref{L:1}, to show that the factor space
$$U_{r,d}=gr\tSH^>[r,d]\;\;\; / \;\;\big\{\sum_{\substack{r'+r''=r\\d'+d''=d}} gr\tSH^>[r',d'] \cdot gr\tSH^>[r'',d'']\big\}$$
is one dimensional for any $d \geqs 0$. Let us set, for any $s \geqs 1, d \geqs 0$
$$\tD'_{s,d}=ad(\tD_{0,2})^d( \tD_{s,0}) \in \tSH^>[s, \leqs d].$$
We will denote by the same symbol $\tD'_{s,d}$ the corresponding element of $gr\tSH^>[s,d]$. Note that $\tD'_{s,0}=\tD_{s,0}$ 
We claim that in fact $U_{r,d}=F \tD'_{r,d}$. Observe that $\phi(\tD'_{s,d})=D'_{s,d}$ for any $s,d$, hence $\tD'_{s,d} \in U_{s,d}$ for any $s \leqs r, d \geqs 0$. 
Moreover, by our general induction hypothesis on $r$ we have $U_{s,d}=F \tD'_{s,d}$ for any $s <r$ and $d \geqs 0$. 

We will prove that $U_{r,d}=F \tD'_{r,d}$ by induction on $d$. For $d=0$, this comes from Lemma~\ref{L:2}. So fix $d >0$ and let us assume that $U_{r,l}=F \tD'_{r,l}$ for all $l < d$. By definition of the filtration on $\tSH^>$, $U_{r,d}$ is linearly spanned by the classes of the elements
$$[\tD_{1,0}, \tD'_{r-1,d+1}], [\tD_{1,1}, \tD'_{r-1, d}], \ldots, [\tD_{1, d+1}, \tD'_{r-1,0}].$$
By our induction hypothesis on $d$, the elements
$$[\tD_{1,0}, \tD'_{r-1,d}], [\tD_{1,1}, \tD'_{r-1, d-1}], \ldots, [\tD_{1, d}, \tD'_{r-1,0}]$$
all belong to $F\tD'_{r,d-1} \oplus \tSH^>[r,\leqs d-2]$. Applying $ad(\tD_{0,2})$, we see that
\begin{equation}\label{E:proof6}
[\tD_{1,0}, \tD'_{r-1, d+1}] + [\tD_{1,1}, \tD'_{r-1,d}], \ldots, [\tD_{1,d}, \tD'_{r-1,1}] + [\tD_{1,d+1}, \tD'_{r-1,0}]
\end{equation}
all belong to $F\tD'_{r,d} \oplus \tSH^>[r,\leqs d-1]$. Next, applying $ad(\tD_{0,d+2})$ to the equality $[\tD_{1,0}, \tD_{r-1,0}]=0$ yields
\begin{equation*}
[\tD_{1,0}, \tD_{r-1,d+1}] + [\tD_{1,d+1}, \tD_{r-1,0}]=0
\end{equation*}
which implies, by (\ref{E:relDD'}), that
\begin{equation}\label{E:proof7}
[\tD_{1,0}, \tD'_{r-1,d+1}] + r^d[\tD_{1,d+1}, \tD_{r-1,0}] \in [\tD_{1,0}, \tSH^>[r-1, \leqs d]] \subseteq \tSH^>[r, \leqs d-1].
\end{equation}
The collection of inclusions (\ref{E:proof6}), (\ref{E:proof7}) may be considered as a system of linear equations in $U_{r,d}$ modulo $F\tD'_{r,d}$ in the variables $[\tD_{1,0}, \tD'_{r-1, d+1}], \ldots, [\tD_{1,d+1}, \tD'_{r-1,0}]$ whose associated matrix is
$$M=\begin{pmatrix} 1 & 0 & \cdots & 0 & 1\\ 1 & 1 & \cdots & 0 & 0\\ 0 & 1 & \ddots &\vdots & \vdots \\ \vdots & \vdots & 
\ddots & 1 & 0 \\ 0 & 0 & \cdots & 1 & -r^d \end{pmatrix}$$
is invertible. We deduce that $[\tD_{1,0}, \tD'_{r-1, d+1}], \ldots, [\tD_{1,d+1}, \tD'_{r-1,0}]$ all belong to the space
$F \tD'_{r,d} \oplus \tSH^>[r, \geqs d-1]$ as wanted. This closes the induction step on $d$. We have therefore proved
that $U_{r,d}=F \tD'_{r,d}$ for all $d \geqs 0$, and hence that $\overline{\phi}$ and $\phi$ is an isomorphism in rank $r$.
This closes the induction step on $r$. Theorem~\ref{T:1} is proved. \qed

\vspace{.2in}

\centerline{\textbf{Acknowledgements}}

\vspace{.05in}

We would like to thank P. Etingof for helpful discussions. The first author would like to thank the mathematics
department of Orsay University for its hospitality and the MIT-France program for supporting his stay.

\vspace{.3in}

\small{}

\vspace{4mm}

\noindent

N. Arbesfeld, \texttt{nma@mit.edu},\\
Massachusetts Institute of Technology
Cambridge, MA 02139, USA.

O. Schiffmann, \texttt{olivier.schiffmann@math.u-psud.fr},\\
D\'epartement de Math\'ematiques, Universit\'e de Paris-Sud, B\^atiment 425
91405 Orsay Cedex, FRANCE.

\vspace{.1in}

\end{document}